\def\@maketitle{\newpage
    \null
    \vskip .8truein
    \begin{center}%
     {\bf \@title \par}%
     \vskip 1.5em
     {\small
      \lineskip .5em
      \begin{tabular}[t]{c}\@author
      \end{tabular}\par}%
    \end{center}%
    \par
    \vskip .4truein}
\let\nn=\nonumber
\newcommand{\re}{{\mathbb R}}
\newcommand{\nat}{{\mathbb N}}
\newcommand{\tor}{{\mathbb T}^N}
\newcommand{\Gh}{\widehat G}
\newcommand{\Fh}{\widehat F}
\newcommand{\z}{{{I\!\!Z}}}
\newtheorem{theorem}{Theorem}[section]
\newtheorem{lemma}[theorem]{Lemma}
\newtheorem{proposition}[theorem]{Proposition}
\newtheorem{corollary}[theorem]{Corollary}
\theoremstyle{remark}
\newtheorem{remark}[theorem]{Remark}
\theoremstyle{definition}
\DeclareMathOperator{\argmax}{argmax}
\def\proof{\list{}{\setlength{\leftmargin}{0pt}
                      \parskip=0pt\parsep=0pt\listparindent=2em
                      \itemindent=0pt}\item[]\futurelet\testchar\@maybe}
\def\@maybe{\ifx[\testchar \let\next\@Opt
          \else \let\next\@NoOpt \fi \next}
\def\@Opt[#1]{{\it Proof of #1.\ }}\def\@NoOpt{{\it Proof.\ }}
\begin{document}
\title{\Large \bf Short time existence for a general backward-forward parabolic system arising from Mean-Field Games}

\author{{\large \sc Marco Cirant\thanks{Dipartimento di Matematica ``Tullio Levi-Civita'', Universit\`a di Padova, cirant@math.unipd.it}, Roberto Gianni\thanks{Dipartimento di Matematica e Informatica ``U. Dini'', Universit\`a di Firenze, roberto.gianni@unifi.it}, Paola Mannucci\thanks{Dipartimento di Matematica ``Tullio Levi-Civita'', Universit\`a di Padova, mannucci@math.unipd.it}}}  


\maketitle


\begin{abstract}
We study the local in time existence of a regular solution of a nonlinear parabolic backward-forward system arising  
from the theory of Mean-Field Games (briefly MFG).  The proof is based on a contraction argument in a suitable space that takes account of the peculiar structure of the system, which involves also a coupling at the final horizon. We apply the result to obtain existence to very general MFG models, including also congestion problems.

\end{abstract}
\noindent {\bf Keywords}:  Parabolic equations, backward-forward system, Mean-Field Games, Hamilton-Jacobi, Fokker-Planck, Congestion problems.

\noindent  {\bf 2010 AMS Subject classification:} 35K40, 35K61, 49N90.


\section{Introduction}
Let $\mathbb{T}^N=\re^N/\z^N$ be the $N$-dimensional flat torus. Denote by $Q_T= \mathbb{T}^N\times (0,T)$.
We consider the following nonlinear backward-forward parabolic system:
\begin{equation}\label{P}
\begin{cases}
-u_t-a_{ij}(x,t)u_{x_ix_j}+F(u, m, Du, Dm, x,t)=0, & \text{in $Q_T$} \\
m_t-c_{ij}(x,t)m_{x_ix_j}+G(u, m, Du, Dm, D^2u, x,t)=0, & \text{in $Q_T$} \\
u(x,T)=h[m(T)](x),\quad m(x,0)=m_0(x), & \text{in $\mathbb{T}^N$},
\end{cases}
\end{equation}
where $h$ is a regularising nonlocal term.\\
The aim of this paper is to study the short time existence of a regular solution of system \eqref{P} under very general assumptions on the data. 
The peculiarities of the system are: 1) nonlinear backward-forward parabolic form; 2) the final condition on $u$ depends on $m$ through a regularising nonlocal term; 3) the coupling functions $F$ and $G$ can have a very general form, but $F$ does not depend on the second derivatives of the unknowns.\\
From the structure 1) and 2), classical results on forward parabolic systems cannot be directly applied and the problem of well posedness is non standard.
The general structure 1)-3) of \eqref{P} is inspired by parabolic systems arising from the theory of Mean-Field Games (briefly MFG), where $u$ represents the value function of a stochastic control problem and $m$ is a density distribution of a population of identical players. In a typical MFG setting, the functions $u$ and $m$ satisfy the following system of two equations (called  Hamilton-Jacobi-Bellman and Fokker-Plank, respectively): 
\begin{equation}\label{MFG0}
\begin{cases}
-u_t - A_{ij} u_{ij} + H(x, t, Du, m) = 0 & \text{in $Q_T$} \\
m_t - \partial_{ij} (A_{ij} m) - {\rm div} (m D_p H(x, t, Du, m)) = 0, & \text{in $Q_T$} \\
u(x,T)=h[m(T)](x),\quad m(x,0)=m_0(x) & \text{in $\mathbb{T}^N$},
\end{cases}
\end{equation}
where $A(x,t) = \frac{1}{2}\Sigma \Sigma^T(x,t)$ and the Hamiltonian $H$ is the Legendre transform of some Lagrangian function $L$, i.e.
\[
H(x, t, p, m) = \sup_{v \in \re^N} \{p\cdot v - L (x, t, v, m)\}.
\]
We refer to Section \ref{MFG} for a more detailed derivation of this system. \smallskip

As for the general problem \eqref{P}, under the assumptions stated at the beginning of the following section, the main existence theorem can be stated as follows:
\begin{theorem}\label{mainTH}
Under the assumptions (A1)-(A5) there exists $\overline T>0$ such that for all $T \in (0, \overline T]$ the problem \eqref{P} has a solution $u,m\in W^{2,1}_p(Q_T)$ with $p>N+2$ satisfying 
equations in \eqref{P} a.e..
\end{theorem}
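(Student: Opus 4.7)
My plan is to apply Banach's contraction principle to a fixed-point operator $\Phi$ defined on a closed ball of $X:=W^{2,1}_p(Q_T)\times W^{2,1}_p(Q_T)$ with $p>N+2$. This choice of space is dictated by the parabolic embedding $W^{2,1}_p(Q_T)\hookrightarrow C^{1+\alpha,(1+\alpha)/2}(\overline Q_T)$, which makes $\tilde u,\tilde m,D\tilde u,D\tilde m$ continuous and uniformly bounded in terms of the $W^{2,1}_p$-norm, and provides a trace $\tilde m(\cdot,T)\in W^{2-2/p,p}(\tor)$ on which the regularising operator $h$ can act. The coupling at both ends of the time interval is absorbed directly into the definition of $\Phi$: no ad hoc splitting of initial/final data is attempted at this stage.

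Given $(\tilde u,\tilde m)$ in a ball $B_R\subset X$, I define $(u,m)=\Phi(\tilde u,\tilde m)$ as the pair of solutions to the linear parabolic problems obtained by freezing the nonlinearities,
\begin{equation*}
-u_t-a_{ij}u_{x_ix_j}=-F(\tilde u,\tilde m,D\tilde u,D\tilde m,x,t),\qquad u(\cdot,T)=h[\tilde m(\cdot,T)],
\end{equation*}
\begin{equation*}
m_t-c_{ij}m_{x_ix_j}=-G(\tilde u,\tilde m,D\tilde u,D\tilde m,D^2\tilde u,x,t),\qquad m(\cdot,0)=m_0.
\end{equation*}
By the Ladyzhenskaya--Solonnikov--Uraltseva $L^p$-theory, $(u,m)\in X$, since by the growth conditions (A1)--(A5) and the regularity of $(\tilde u,\tilde m)$ the right-hand sides lie in $L^p(Q_T)$, while the boundary data lie in $W^{2-2/p,p}(\tor)$ thanks to the regularising character of $h$ and the standing hypothesis on $m_0$.

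To obtain $\Phi(B_R)\subset B_R$, I fix $R$ of order twice the norm of the ``data-only'' part of the solution (the one produced by $m_0$ and $h[\tilde m(T)]$ with zero right-hand side) and then exploit the fact that the nonlinear contributions, being evaluated on uniformly bounded arguments, produce in $L^p(Q_T)$ a factor $T^{1/p}$ which is as small as desired. For the contraction I estimate the norm of $(u_1-u_2,m_1-m_2)$, where $(u_i,m_i)=\Phi(\tilde u_i,\tilde m_i)$; this pair solves analogous linear parabolic problems with sources given by the differences of $F$ and of $G$ and final trace $h[\tilde m_1(T)]-h[\tilde m_2(T)]$. The Lipschitz character of $F,G,h$ guaranteed by (A1)--(A5), combined with a further $T^{1/p}$ gain, produces a Lipschitz constant strictly smaller than $1$ for $T$ sufficiently small, and Banach's theorem yields the fixed point, which by construction solves \eqref{P} in the $W^{2,1}_p$ sense almost everywhere.

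The core technical obstacle will be closing the estimates in presence of the $D^2\tilde u$-dependence in $G$: the $L^p$-norm of $D^2\tilde u$ is controlled only by $R$ and does not vanish with $T$, so the naive short-time smallness fails on that contribution. This will have to be handled either by a structural hypothesis in (A1)--(A5) on the second-derivative entry of $G$ (e.g.\ linearity with a prefactor that can be absorbed) or by a careful splitting of $m$ into a ``quasilinear'' component and a small remainder; the analogous pathology for the backward equation is precisely why (A1)--(A5) forbid $D^2u,\,D^2m$ from entering $F$. Equally delicate is the coupling $u(T)=h[\tilde m(T)]$: without the regularising property of $h$, the trace $\tilde m(\cdot,T)$ would lie only in $W^{2-2/p,p}(\tor)$ and the contraction for $\delta u$ could not be closed in $W^{2,1}_p$.
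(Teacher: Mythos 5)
Your set-up is the right one in outline (freeze the nonlinearities, solve two linear parabolic problems, invoke the LSU $L^p$-theory, contract for small $T$), but the proposal stops exactly where the real work begins: you yourself observe that the $L^p(Q_T)$-norm of $D^2\tilde u$ is controlled only by $R$ and carries no factor vanishing with $T$, and you defer its resolution to an unspecified ``structural hypothesis'' or ``careful splitting''. This is the central difficulty of the theorem, and it is not closed by your argument. With the symmetric choice $X=W^{2,1}_p(Q_T)\times W^{2,1}_p(Q_T)$, the map $\tilde u\mapsto m$ transmits $\|D^2\tilde u_1-D^2\tilde u_2\|_{p,Q_T}$ into $\|m_1-m_2\|^{(2)}_{p,Q_T}$ with a constant of order $L_G$ times the parabolic-estimate constant, which does not become small as $T\to 0$; hence no contraction, and no invariance of the ball either, is obtained in your norm.

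The paper closes this gap with two devices that are absent from your proposal. First, the fixed-point space is \emph{asymmetric}: one controls $\|u\|^{(2)}_{p,Q_T}+|u|^{(1)}_{Q_T}+|m|^{(1)}_{Q_T}$, i.e.\ only the $C^{1,0}$-norm of $m$ (which is all that re-enters $F$ and $G$, since $D^2m$ never appears); the passage from $\|m\|^{(2)}_{p,Q_T}$ to $|m|^{(1)}_{Q_T}$ goes through the embedding into $C^{2-\frac{N+2}{p},\,\cdot}(Q_T)$ and the interpolation $|f|^{(1)}_{Q_T}\le |f(\cdot,0)|^{(1)}_{\tor}+T^{\alpha/2}|f|^{(1+\alpha)}_{Q_T}$, which manufactures precisely the factor $T^{\frac12-\frac{N+2}{2p}}$ that your $D^2\tilde u$-contribution lacks. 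Second, smallness of the full $W^{2,1}_p$-norm of $u$ is obtained not from the measure of $Q_T$ acting on the source (your $T^{1/p}$, which does not touch the data part coming from $h$) but from the boundedness of $F$: one estimates $\|u\|^{(2)}_{q,Q_T}$ for $q=2p$ and then descends to exponent $p$ by H\"older's inequality, gaining $T^{1/(2p)}$ on the \emph{whole} norm. Two further points you gloss over: (i) $F,G$ are only locally Lipschitz and require $m$ bounded away from $0$, so one must first truncate the nonlinearities and verify a posteriori, using the smallness above, that the fixed point remains in the region where the truncation is the identity; (ii) the constants in the $L^p$-estimates and in the embedding $W^{2,1}_p(Q_T)\hookrightarrow C^{1,0}(Q_T)$ must be shown to stay bounded as $T\to 0$ (the standard LSU corollary degenerates there), which the paper arranges by keeping the initial trace explicit in every estimate.
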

\noindent The solution found in Theorem \ref{mainTH} is locally unique in the sense specified in Remark \ref{localun}.
The proof of the theorem is based on a contraction procedure in a suitable space, that takes into account the forward-backward structure of the system which has a coupling also at the final horizon. We only require $F$ and $G$ to be bounded with respect to $x, t$ and {\it locally} Lipschitz continuous with respect to the other entries; in addition, $G$ is required to be {\it globally} Lipschitz continuous with respect to the entry of the second order term $D^2 u$. This is a natural assumption for the models that we have in mind (see in particular the equation for $m$ in \eqref{MFG0}).  As stated in point 2), $h$ should be a regularising function of $m$. Such gain of regularity is true for example when one considers $h$ of convolution form, or $h$ independent of $m$. The gain of regularity of $h$ is crucial in our fixed point method. Without this assumption the argument would need additional smallness of other data. For additional comments, see Remark \ref{remass} and Section \ref{counter}, where it is shown that existence for arbitrary small times $T$ may even fail for linear problems when $h$ is not regularizing.

Our existence result can be applied to very general MFG models. The existence of smooth solutions for systems of the form \eqref{MFG0} has been explored in several works, see e.g. \cite{Car, CT, GPS1, GPS2, Gbook, LLe, LL2} and references therein. Existence for arbitrarily large time horizons $T$ typically requires assumptions on the behaviour of $H$ at infinity, that are crucial to obtain a priori estimates. Our result is for short-time horizons, but just requires enough {\it local} regularity of $H$: we have basically no restrictions on the behaviour of $H$ when its entries are large. We are in particular interested in MFG models with congestion, that are particularly delicate due to the presence of a singular Hamiltonian $H$. Short time existence has been discussed in \cite{GV}, \cite{Gr} under suitable growth assumptions on $H$, relying on the peculiar MFG structure.
For a detailed description and derivation of MFG systems, additional references and the statements of our results on congestion problems, see Section \ref{MFG}. 

The paper is organized as follows: in Section 2 we state the assumptions and we present some preliminary results. In Section 3 we give the proof of the main theorem. We also give a counterexample for a very simple linear system where the final condition is of local (non-regularizing) type.
In Section 4 we apply the result to prove short time existence of a solution to some general classes of MFGs.
In the Appendix we give the proof of the classical estimate in the periodic setting, stated in Section 2, that is used extensively.

\medskip

\noindent {\em Notations}: For any non-negative real number $r \ge 0$ and $q \ge 1$, we will denote by $W_q^{r}(\tor)$ the (fractional) Sobolev-Slobodeckij space of periodic functions (see \cite[p. 70]{LSU} for its definition); we will denote by $\|u\|^{(m)}_{q, \tor}$ its norm. Note that when $r \in \nat$, $W_q^{r}(\tor)$ is a standard Sobolev space. For any positive integer $m$, $W_q^{2m, m}(Q_T)$, with norm $\|u\|^{(2m)}_{q,Q_T}$, will be the usual Sobolev parabolic space (see \cite[p. 5]{LSU}). $W^{1,0}_p (Q_T)$, with norm $\|u\|^{(1)}_{q,Q_T}$, will be the space of functions in $L^p(Q_T)$ with weak derivatives in the $x$-variable in $L^p(Q_T)$. 
For any real and non-integer number $r >0$, $C^{r,r/2}(Q_T)$ with norm $|u|^{(r)}_{Q_T}$ will be the standard H\"older parabolic space (see \cite[p. 7]{LSU}, where the alternative notation $H^{r,r/2}$ is used). Note that here we mean that the regularity is up to the parabolic boundary, hence, since the spatial variable varies in the torus, up to  $t=0$.
Finally, $C^{1,0}(Q_T)$ with norm $|u|^{(1)}_{Q_T}$ will be the space of continuous functions on $Q_T$ with continuous derivatives in the $x$-variable, up to $t=0$ as for the H\"older spaces. We denote by $\|\cdot \|_{\infty}$ the $\infty$-norm. $S^N$ denotes the space of symmetric matrices of order $N$.

\section{Setting of the problem and preliminary results}\label{ass}
In this section we state our standing assumptions and we write some useful lemmata and propositions. 
Throughout the paper we assume:
\begin{itemize}
\item[(A1)] $a_{ij}(x,t)$ and $c_{ij}(x,t)$ are continuous functions on $Q_T$.
\item[(A2)] $F(a,b,p,q, x,t): \re\times\re^+\times\re^N\times\re^N\times Q_T \rightarrow \re$ is 
such that
for all $M > 0$ there exists $L_F(M) > 0$ ($L_F(M)$ is an increasing function of $M$, bounded for bounded values of $M$) such that
\begin{eqnarray*}
&&| F(a_1,b_1,p_1,q_1, x,t)|_{\infty}\leq L_F(M),\\
&&| F(a_1,b_1,p_1,q_1, x,t)-F(a_2,b_2,p_2,q_2, x,t)| \leq \\ 
&&L_F(M)(|a_1-a_2|+|b_1-b_2|+|p_1-p_2|+|q_1-q_2|),
\end{eqnarray*} 
for all $|a_i|, |b_i|, |b_i|^{-1}, |p_i|, |q_i| \le 2M$, $i = 1,2$ and all $(x,t) \in Q_T$.
\item[(A3)] $G(a,b,p,q,H,x,t): \re\times\re^+\times\re^N\times\re^N\times S^N\times Q_T \rightarrow \re$ is such that for all $M > 0$ there exists $L_G(M) > 0$ 
($L_G(M)$ is an increasing function of $M$, bounded for bounded values of $M$) such that
\begin{eqnarray*}
&&| G(a_1,b_1,p_1,q_1, H_1, x,t)|_{\infty}\leq L_G(M)(1+|H_1|),\\
&&| G(a_1,b_1,p_1,q_1,H_1,x,t)-G(a_2,b_2,p_2,q_2,H_2,x,t)| \leq\\ 
&& L_G(M)(|a_1-a_2|+|b_1-b_2|+|p_1-p_2|+|q_1-q_2|)(1 + |H_1|)+\\
&&L_G(M)|H_1-H_2|,
\end{eqnarray*}
for all $|a_i|, |b_i|, |b_i|^{-1}, |p_i|, |q_i| \le 2M$, $i = 1,2$ and all $H_i \in S^N$, $(x,t) \in Q_T$.
\item[(A4)] $h: C^{1}(\mathbb{T}^N) \to C^{2}(\mathbb{T}^N)$, and there exists $L_h > 0$ such that
$|h[m_1]- h[m_2]|_{\tor}^{(2)}\leq L_h|m_1-m_2|_{\tor}^{(1)}$.
\item[(A5)] $m_0\in W^2_{\infty}(\mathbb{T}^N)$ and $m_0\geq \delta>0$.
\end{itemize}

%
%
Before we prove the theorem, some remarks on the assumptions and useful preliminary lemmata are in order.

\begin{remark}\label{remass} First, note that (A2) and (A3) require $F$ and $G$ to be bounded with respect to $x, t$ and {\it locally} Lipschitz continuous with respect to $a,b,p,q$. Note that for $G$ we need a linear dependence on $H$, this is a natural assumption for the models we have in mind.
Moreover, $G$ is required to be {\it globally} Lipschitz continuous with respect to $H$, that corresponds to the entry of the second order term $D^2 u$.

By (A4), $h$ should be a {\it regularizing} function of $m$. Such gain of regularity holds for example when one considers $h$ of the form $h[m] = h_0(m \star \psi)$, where $h_0$ is a twice differentiable function and $\psi$ is a smoothing kernel. Another example is to consider a constant function of $m$, namely $h[m] = u_T$, where $u_T \in C^{2}(\mathbb{T}^N)$. The gain of regularity of $h$ is crucial in our fixed point method. Without this assumption, say if $h[m](x) = h_0(m(x))$, the argument would need additional smallness of other data. In this case, as we will see in Section \ref{counter}, existence for arbitrary small times $T$ may even fail for linear problems.
\end{remark}
\begin{lemma}\label{hbound}
There exists $C_0 > 0$ such that
\begin{equation}\label{A4con}
|h[m]|_{\tor}^{(2)}\leq L_h|m|_{\tor}^{(1)}+C_0.
\end{equation}
\end{lemma}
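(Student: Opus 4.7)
The plan is to derive the inequality as a direct consequence of assumption (A4) by fixing a reference argument and invoking the triangle inequality in $C^2(\tor)$. In essence the lemma just expresses the elementary fact that a Lipschitz map between normed spaces has linear growth, provided one knows its value at a single point of the domain.

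Concretely, I would proceed as follows. Pick the zero function $\bar m \equiv 0 \in C^1(\tor)$ (any other fixed reference would work equally well). By (A4) the map $h$ sends $C^1(\tor)$ into $C^2(\tor)$, so in particular $h[\bar m] \in C^2(\tor)$ and the quantity
\[
C_0 := |h[\bar m]|_{\tor}^{(2)}
\]
is a finite nonnegative constant depending only on $h$. Applying the Lipschitz estimate in (A4) with $m_1 = m$ and $m_2 = \bar m$ gives
\[
|h[m] - h[\bar m]|_{\tor}^{(2)} \leq L_h\, |m - \bar m|_{\tor}^{(1)} = L_h\, |m|_{\tor}^{(1)},
\]
and the triangle inequality in the Banach space $C^2(\tor)$ then yields
\[
|h[m]|_{\tor}^{(2)} \leq |h[m] - h[\bar m]|_{\tor}^{(2)} + |h[\bar m]|_{\tor}^{(2)} \leq L_h\, |m|_{\tor}^{(1)} + C_0,
\]
which is exactly \eqref{A4con}.

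There is no real obstacle: beyond (A4) itself the only ingredient is that $h$ produces a genuine element of $C^2(\tor)$ at the chosen reference point, which is built into the codomain hypothesis on $h$. The constant $C_0$ depends only on $h$ and not on $m$, as required for the use of \eqref{A4con} in the subsequent fixed point argument.
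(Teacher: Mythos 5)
Your proof is correct and follows essentially the same route as the paper: apply the Lipschitz estimate of (A4) against a fixed reference element and conclude by the triangle inequality in $C^{2}(\mathbb{T}^N)$. The only difference is cosmetic --- you anchor at $\bar m \equiv 0$ with $C_0 = |h[0]|_{\tor}^{(2)}$, while the paper anchors at the initial datum $m_0$ (yielding $C_0 = L_h|m_0|_{\tor}^{(1)} + |h[m_0]|_{\tor}^{(2)}$); since (A4) declares $h$ on all of $C^{1}(\mathbb{T}^N)$, both choices are legitimate.
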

\begin{proof}
Since 
$$|h[m_1]- h[m_0]|_{\tor}^{(2)}\geq |h[m_1]|_{\tor}^{(2)}- |h[m_0]|_{\tor}^{(2)},$$
hence from (A4) and (A5)
$$|h[m_1]|_{\tor}^{(2)}\leq  L_h|m_1-m_0|_{\tor}^{(1)}+
 |h[m_0]|_{\tor}^{(2)}\leq  L_h|m_1|_{\tor}^{(1)}+L_h|m_0|_{\tor}^{(1)}+
 |h[m_0]|_{\tor}^{(2)}.$$
\end{proof}

\begin{lemma}\label{L1}
Let $\alpha \in (0, 1)$. For any $f\in C^{1+\alpha, (1+\alpha)/2}(Q_T)$,
 \begin{equation}\label{holdgen}
  |f|^{(1)}_{Q_T}\leq |f(\cdot,0)|^{(1)}_{\tor}+ T^{\alpha/2}|f|^{(1+\alpha)}_{Q_T}.
 \end{equation}
 \end{lemma}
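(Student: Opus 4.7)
The plan is to decompose $|f|^{(1)}_{Q_T}$ into its components (sup of $f$ and sup of $Df$), compare each to its trace at time $t=0$, and control the time-increment via the Hölder-in-time seminorms that are built into $|f|^{(1+\alpha)}_{Q_T}$.

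First I would note that for $f \in C^{1+\alpha,(1+\alpha)/2}(Q_T)$ with $\alpha \in (0,1)$, the standard parabolic Hölder norm (cf.~LSU) contains, among others, the seminorm $[Df]_{t,Q_T}^{(\alpha/2)}$ of $Df$ in time. Thus for every $(x,t) \in Q_T$ one has
\[
|Df(x,t) - Df(x,0)| \leq t^{\alpha/2}\,[Df]_{t,Q_T}^{(\alpha/2)} \leq T^{\alpha/2}\,|f|^{(1+\alpha)}_{Q_T}.
\]
Writing $Df(x,t) = Df(x,0) + (Df(x,t) - Df(x,0))$ and taking the sup over $(x,t)$ produces
\[
\sup_{Q_T}|Df| \leq \sup_{\tor}|Df(\cdot,0)| + T^{\alpha/2}\,|f|^{(1+\alpha)}_{Q_T}.
\]

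For the $\sup|f|$ contribution the argument is analogous, using the time-Hölder seminorm $[f]_{t,Q_T}^{((1+\alpha)/2)} \leq |f|^{(1+\alpha)}_{Q_T}$ (since $(1+\alpha)/2 \geq \alpha/2$, any excess power of $T$ is harmless in the short-time regime $T \leq 1$ that the paper is working in):
\[
|f(x,t) - f(x,0)| \leq t^{(1+\alpha)/2}\,[f]_{t,Q_T}^{((1+\alpha)/2)} \leq T^{\alpha/2}\,|f|^{(1+\alpha)}_{Q_T}.
\]
Hence $\sup_{Q_T}|f| \leq \sup_{\tor}|f(\cdot,0)| + T^{\alpha/2}\,|f|^{(1+\alpha)}_{Q_T}$. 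Summing the two estimates (and recalling $|f(\cdot,0)|^{(1)}_{\tor} = \sup|f(\cdot,0)| + \sup|Df(\cdot,0)|$) yields the claim.

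There is essentially no substantive obstacle: the lemma is a direct consequence of the parabolic-Hölder definition, with the only mildly delicate point being the matching of the time exponent $(1+\alpha)/2$ for $f$ with the desired exponent $\alpha/2$, which is absorbed by the short-time restriction $T \leq 1$ implicit in the fixed-point argument of the paper.
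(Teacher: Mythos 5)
Your proof is correct and is precisely the expansion of the paper's one-line proof (``follows immediately from the definition''): split the $C^{1,0}$ norm into the sup of $f$ and of $Df$, and bound each increment in time by the corresponding time-H\"older seminorm that the definition of $|f|^{(1+\alpha)}_{Q_T}$ contains (exponent $(1+\alpha)/2$ for $f$, exponent $\alpha/2$ for $Df$). Your handling of the exponent mismatch for the zeroth-order term is the right observation; note also that for $T\ge 1$ the inequality is trivial anyway, since $T^{\alpha/2}\ge 1$ and the full norm $|f|^{(1+\alpha)}_{Q_T}$ already dominates $|f|^{(1)}_{Q_T}$.
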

 \begin{proof} Follows immediately from the definition of $|f|^{(1+\alpha)}_{Q_T}$. \end{proof}
 
\begin{lemma}\label{L2}
Let $q \ge 2$ and $f \in L^q(Q_T)$ be such that
$
 \| f\|_{q, Q_T}\leq C.
$
 Then, for $p=q/2$,
  \begin{equation}\label{L22prima}
 \|f\|_{p, Q_T}\leq C\, T^{\frac{1}{2p}}.
 \end{equation}
Let $f\in W^{2,1}_{q}(Q_T)$ be such that
$
 \| f\|^{(2)}_{q, Q_T}\leq C.
$
 Then, for $p=q/2$
  \begin{equation}\label{L22}
 \|f\|^{(2)}_{p, Q_T}\leq C\, T^{\frac{1}{2p}}.
 \end{equation}
  \end{lemma}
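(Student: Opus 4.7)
The strategy is to reduce the second statement to the first by applying it componentwise to $f$, $Df$, $D^2f$ and $f_t$, so the heart of the argument is a one-line H\"older inequality in the time-space cylinder $Q_T$.

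For the first inclusion, since $p=q/2$ and $|\tor|=1$, I would write $q=2p$ and apply H\"older's inequality with conjugate exponents $2$ and $2$:
\begin{equation*}
\|f\|_{p,Q_T}^{p}=\int_{Q_T}|f|^{p}\cdot 1\,dx\,dt\leq \left(\int_{Q_T}|f|^{2p}\,dx\,dt\right)^{1/2}\left(\int_{Q_T}1\,dx\,dt\right)^{1/2}=\|f\|_{q,Q_T}^{p}\,T^{1/2}.
\end{equation*}
Taking $p$-th roots gives $\|f\|_{p,Q_T}\leq \|f\|_{q,Q_T}\,T^{1/(2p)}\leq C\,T^{1/(2p)}$, which is \eqref{L22prima}.

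For the second inclusion, I recall that the $W^{2,1}_{p}(Q_T)$ norm is the sum (or equivalent combination) of $\|f\|_{p,Q_T}$, $\|Df\|_{p,Q_T}$, $\|D^{2}f\|_{p,Q_T}$ and $\|f_t\|_{p,Q_T}$; each of these quantities, by the hypothesis $\|f\|^{(2)}_{q,Q_T}\leq C$, is bounded in $L^{q}(Q_T)$ by $C$. Applying \eqref{L22prima} separately to $f$, to each first-order spatial derivative, to each second-order spatial derivative, and to $f_t$, each summand is bounded by $C\,T^{1/(2p)}$, and the conclusion \eqref{L22} follows after collecting all terms (up to a harmless dimensional constant absorbed into $C$).

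I do not expect any real obstacle here: the assertion is essentially the elementary fact that on a finite measure space $L^{q}\hookrightarrow L^{p}$ for $p\leq q$ with embedding constant equal to a power of the measure. The only detail one has to keep track of is that the cylinder $Q_T=\tor\times(0,T)$ has measure exactly $T$, which is what produces the precise exponent $1/(2p)$ in the final bound.
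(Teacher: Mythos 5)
Your proof is correct and follows essentially the same route as the paper: Hölder's inequality on the finite-measure cylinder $Q_T$ (the paper does it with a general exponent $r$ and then sets $r=2$, which is exactly your conjugate pair $2,2$), and the $W^{2,1}$ case is handled by applying the scalar estimate to each derivative, which the paper dismisses as ``analogous.'' No gaps.
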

  
 \begin{proof}
 We prove \eqref{L22prima}, \eqref{L22} is analogous.
 By H\"older inequality applied to $|f|^p$ for any $r > 1$, take $r'$ such that $\frac{1}{r}+\frac{1}{r'}=1$ we have
 \begin{multline*}
\|f\|_{p,Q_T}=\left(\int_{Q_T}|f|^pdxdt\right)^{1/p}\leq 
\left (\int_{Q_T}|f|^{pr}dxdt\right)^{1/pr}\left(\int_{Q_T}dxdt\right)^{1/pr'}=\\
\|f\|_{pr,Q_T}\left(\int_{Q_T}dxdt\right)^{(r-1)/pr}=\|f\|_{pr,Q_T}(|\tor|T)^{(r-1)/pr}\leq 
 C\|f\|_{pr,Q_T}T^{(r-1)/pr}.
 \end{multline*}
 Choosing $r$ such that $q=rp$, we have
 \begin{equation}\label{beta}
 \|f\|_{p, Q_T}\leq C\,T^{\frac{q-p}{pq}},\ with\ q>p.
 \end{equation}
Taking $r=2$, i.e. $q=2p$
 we have the result.
 \end{proof}

We recall now the following embedding proposition proved by R. Gianni in \cite{RG}. Observe that the constant $M$ remains bounded for bounded values of $T$ while in the estimate  of Corollary of p.342  of \cite{LSU} it blows up as $T$ tends to zero.
 \begin{proposition}[\it Inequality (2.21) of \cite{RG}]\label{gianni}
 Let $f\in W^{2,1}_{q}(Q_T)$. Then,
 \begin{equation}\label{embRob}
  |f|^{(2-\frac{N+2}{p})}_{Q_T}\leq M\left(\|f\|^{(2)}_{p, Q_T}+ \|f(x,0)\|^{(2-2/p)}_{p, \tor}\right), \qquad p>\frac{N+2}{2},\ p\neq N+2,
 \end{equation}
where $M$ remains bounded for bounded values of $T$.
\end{proposition}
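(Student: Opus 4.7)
The plan is to circumvent the $T\to 0$ degeneracy of the classical embedding on $Q_T$ by extending $f$ past $t=0$ to an enlarged cylinder $\tor\times(-1,T)$, whose time length $T+1$ is bounded below by $1$. On such a cylinder the constant in the standard parabolic embedding $W^{2,1}_p\hookrightarrow C^{2-(N+2)/p,\,(2-(N+2)/p)/2}$ depends only on an upper bound of the time length, and therefore stays bounded whenever $T$ is bounded.

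The decisive ingredient is the construction of an extension $\tilde f\in W^{2,1}_p(\tor\times(-1,0))$ of the initial trace $\varphi:=f(\cdot,0)\in W^{2-2/p}_p(\tor)$ satisfying $\tilde f(\cdot,0)=\varphi$ and
\[
\|\tilde f\|^{(2)}_{p,\tor\times(-1,0)}\leq C_1\,\|\varphi\|^{(2-2/p)}_{p,\tor},
\]
with $C_1$ independent of $\varphi$. This is nothing but the surjectivity of the parabolic trace map from $W^{2,1}_p$ to $W^{2-2/p}_p$. On the torus it can be realised explicitly: set $\tilde f(x,t):=v(x,-t)$ for $t\in[-1,0]$, where $v$ solves the forward heat equation $v_s-\Delta v=0$ on $\tor\times(0,1)$ with $v(\cdot,0)=\varphi$. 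Maximal $L^p$-regularity for the heat equation on $\tor$, combined with the Slobodeckij characterisation of $W^{2-2/p}_p(\tor)$ as the trace space, yields the displayed estimate.

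Next I glue: define $F=\tilde f$ on $\tor\times[-1,0]$ and $F=f$ on $Q_T$. Since the two pieces share the trace $\varphi$ at $t=0$, the weak time derivative of $F$ exists on the whole enlarged cylinder and coincides with $\tilde f_t$ on $(-1,0)$ and with $f_t$ on $(0,T)$; hence $F\in W^{2,1}_p(\tor\times(-1,T))$ with
\[
\|F\|^{(2)}_{p,\tor\times(-1,T)}\leq \|f\|^{(2)}_{p,Q_T}+C_1\|f(\cdot,0)\|^{(2-2/p)}_{p,\tor}.
\]
Applying the classical LSU embedding on $\tor\times(-1,T)$, whose time length $T+1$ lies in $[1,T_{\max}+1]$ for any fixed upper bound $T_{\max}$, gives
\[
|F|^{(2-(N+2)/p)}_{\tor\times(-1,T)}\leq C_2\,\|F\|^{(2)}_{p,\tor\times(-1,T)}
\]
with $C_2=C_2(T_{\max})$ bounded for bounded $T$. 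Restricting $F$ back to $Q_T$ and chaining the two inequalities yields the claim with $M=C_2\max(1,C_1)$.

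The main obstacle is the extension step: one must show that a suitable right inverse of the parabolic trace operator is bounded from $W^{2-2/p}_p(\tor)$ into $W^{2,1}_p(\tor\times(-1,0))$. Once this classical fact is in hand, the remainder is a clean gluing argument, and the $T$-boundedness of $M$ is automatic since the enlarged cylinder always has time length at least $1$; in particular the estimate does not deteriorate as $T\to 0^+$, which is precisely the improvement over the Corollary of p.~342 of \cite{LSU} relevant to the contraction scheme used in the proof of Theorem~\ref{mainTH}.
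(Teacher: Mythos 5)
Your argument is correct, and it is worth noting that the paper itself offers no proof of this proposition: it is quoted verbatim as inequality (2.21) of the cited work of Gianni, so there is nothing internal to compare against line by line. Your extension-and-gluing construction is a legitimate, self-contained derivation of the same inequality, and it isolates exactly the right mechanism: the degeneracy of the Corollary on p.~342 of \cite{LSU} as $T\to 0$ (visible already for $f\equiv 1$, where $\|f\|^{(2)}_{p,Q_T}\to 0$ while the H\"older norm stays at $1$) is cured by paying the price of the initial trace, which you realise as a bounded right inverse of the parabolic trace map $W^{2,1}_p(\tor\times(-1,0))\to W^{2-2/p}_p(\tor)$ via the heat semigroup run backwards in the time variable; gluing across $t=0$ is licit because the one-dimensional (in $t$) traces match, and the enlarged cylinder has time length at least $1$, so the classical embedding constant no longer sees $T\to 0$. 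Two points deserve to be made explicit if this were to be written out in full: (i) the uniformity in $T\le T_{\max}$ of the embedding constant on $\tor\times(-1,T)$ requires a short covering/chaining argument over time intervals of fixed length (for time increments $|t-s|\ge 1/2$ the H\"older quotient is trivially controlled by $\|F\|_\infty$); and (ii) the embedding on the torus should be reduced to the Euclidean one by periodic identification on $[-1,1]^N\subset[-2,2]^N$, exactly as the paper does in its Appendix for Proposition \ref{w2pest}. Neither is a gap, and your proof would make the paper self-contained on this point.
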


 In the following proposition we state some regularity results for linear parabolic equations in the flat torus. Such results are classical and well-known for equations on cylinders with boundary conditions (see \cite{LSU}); for the convenience of the reader, we show that they hold true also for equations that are set in the domain $Q_T=\mathbb{T}^N\times (0,T)$, and basically follow from local parabolic regularity. Let 
\begin{equation}\label{linapp}
\begin{cases}
{\mathcal L}u:= u_t-\sum_{ij}a_{ij}(x,t)u_{x_ix_j}+ \sum_ia_i(x,t)u_{x_i}+ a(x,t)u=f(x,t) & \text{in $Q_T$}, \\
u(x,0)=u_0(x) & \text{in $\mathbb{T}^N$.}
\end{cases}
\end{equation}
H1) Suppose that the functions $a_{ij}$, $a_i$, $a$, $f$ belong to $C^{\alpha,\alpha/2}(Q_T)$ and $u_0(x)\in C^{2+\alpha}(\mathbb{T}^N)$.\\
H2) Suppose that the functions $a_{ij}$, $a_i$, $a$ are continuous functions in $Q_T$, $f\in L^q(Q_T)$  and 
$u_0(x)\in W_q^{2-2/q}(\mathbb{T}^N)$ with $q>3/2$.\\
\begin{proposition}\label{w2pest}
Under assumptions H1) there exists a unique solution $u\in C^{2+\alpha,1+\alpha/2}(Q_T)$ of problem \eqref{linapp}
and the following estimate holds:
\begin{equation}\label{320}
  |u|^{(\alpha+2)}_{Q_T}\leq C_1\left(|f|^{(\alpha)}_{Q_T}+ 
  |u_0|^{(\alpha+2)}_{\tor}\right).
 \end{equation}
 where the constant $C_1$ depends only on the norms of the coefficients $a_{ij}$, $a_i$, $a$ specified in H1), on $N, \alpha$ and $T$, and remains bounded for bounded values of $T$.\\
Under assumptions H2) there exists a unique solution $u\in W_q^{2,1}(Q_T)$ of problem \eqref{linapp}
and the following estimate holds:
 \begin{equation}\label{342}
  \|u\|^{(2)}_{q,Q_T}\leq C_2\left(|f|_{q,Q_T}+ 
  |u_0|^{(2-2/q)}_{q, \tor}\right),
 \end{equation}
where the constant $C_2$ depends only on the norms of the coefficients $a_{ij}$, $a_i$, $a$ specified in H2), on $N, q$ and $T$, and remains bounded for bounded values of $T$.
\end{proposition}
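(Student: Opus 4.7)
The strategy is to reduce both statements to the classical interior Schauder and $L^q$ parabolic estimates on bounded cylinders in $\re^N\times(0,T)$, exploiting the compactness of $\tor$ and the absence of a spatial boundary. Throughout I identify functions on $\tor$ with their $\z^N$-periodic lifts to $\re^N$, and extend the coefficients, $f$ and $u_0$ periodically to $\re^N$.

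For existence, under H1 I would solve the auxiliary Cauchy--Dirichlet problem for $\mathcal{L}u=f$ on a large cube $Q=(-R,R)^N\times(0,T)$ with $u_0$ as initial datum and zero lateral data, using the classical Schauder existence theory on bounded cylinders, and then pass to the limit $R\to\infty$. By the parabolic maximum principle in the class of bounded solutions, the limiting solution on $\re^N\times(0,T)$ is unique; since the problem is $\z^N$-translation invariant, $u(\cdot+k,\cdot)$ solves the same Cauchy problem, hence $u$ is $\z^N$-periodic and descends to $Q_T$. Under H2, existence is obtained analogously through the Calder\'on--Zygmund / $L^q$ solvability on bounded cylinders, followed by the same uniqueness/periodicity argument.

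For \eqref{320}, I would cover $\tor$ by finitely many balls $\{B_{r}(x_k)\}_{k=1}^K$ with $r$ chosen independently of $T$, and fix a smooth partition of unity $\{\chi_k\}$ subordinate to $\{B_{2r}(x_k)\}$. The function $w_k=\chi_k u$ solves $\mathcal{L}w_k=g_k$ on the cylinder $B_{2r}(x_k)\times(0,T)$ with zero lateral data and initial datum $\chi_k u_0$, where
\[
g_k=\chi_k f-\sum_{ij}a_{ij}\bigl((\partial_{ij}\chi_k)u+2(\partial_i\chi_k)(\partial_j u)\bigr)+\sum_i a_i(\partial_i\chi_k)u.
\]
Applying the classical Schauder estimate (Theorem 10.1 of \cite{LSU}) to each $w_k$ on this fixed cylinder, summing over $k$ and using $\sum_k\chi_k\equiv 1$, one gets
\[
|u|^{(\alpha+2)}_{Q_T}\leq C\bigl(|f|^{(\alpha)}_{Q_T}+|u_0|^{(\alpha+2)}_{\tor}+\|u\|_{\infty}\bigr),
\]
with $C$ depending on $r$, $N$, $\alpha$ and the coefficient norms, but not on $T$. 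The $L^\infty$ term is absorbed using the parabolic maximum principle on $\tor$, which via Gronwall yields $\|u\|_\infty\leq C(T)(\|u_0\|_\infty+\|f\|_\infty)$ with $C(T)$ bounded for bounded $T$. Estimate \eqref{342} is obtained by the same localization strategy, replacing the Schauder estimate by the parabolic $L^q$ estimate (Theorem 9.1 of \cite{LSU}) and controlling the remaining $\|u\|_q$-term either by an energy argument or through an $L^\infty$-bound combined with parabolic interpolation.

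The main obstacle I foresee is the uniform-in-$T$ control of the constants, which is precisely the improvement over the LSU Corollary on p.~342 quoted in Proposition \ref{gianni}. The classical version on a parabolic cylinder produces a constant that blows up as $T\to 0^+$, because the intrinsic spatial scale is $\sqrt T$. In the argument above, the spatial radius $r$ of the localizing balls is fixed once and for all, so the local Schauder and $L^q$ constants depend only on $r$, $N$, $\alpha$ or $q$ and the coefficient norms; the initial-datum term enters with a $T$-independent constant, since the classical local estimates are stated up to the parabolic boundary $\{t=0\}$. Hence the resulting global constants $C_1$ and $C_2$ remain bounded on bounded intervals of $T$, as required.
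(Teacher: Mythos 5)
Your proposal is correct in outline and reaches the same estimates, but it localizes differently from the paper. The paper does not use a partition of unity: it lifts everything periodically to $\re^N$, invokes the whole-space Cauchy theory (Theorem 5.1, p.~320 of \cite{LSU}) for existence (uniqueness of the Cauchy problem then forces periodicity, as in your argument), and for the estimate applies the \emph{interior} local estimate (10.5), p.~352 of \cite{LSU} on the nested cubes $[-1,1]^N\subset[-2,2]^N$. Since each cube contains a full period, the H\"older norms over $[-1,1]^N$, $[-2,2]^N$ and $\tor$ all coincide, so the ``local'' estimate is already global on the torus and no cutoff commutator terms ever appear. The remaining zeroth-order term $|u|$ is absorbed not by the maximum principle but by writing $|u|\leq |u_0|+T^*|u_t|$ on a short time interval $[0,T^*]$, absorbing $T^*|u_t|$ into the left-hand side for $T^*$ small, and iterating in time to cover $[0,T]$. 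What the paper's route buys is exactly the avoidance of the two points your proposal glosses over: (i) your source term $g_k$ contains $(\partial_i\chi_k)(\partial_j u)$, so $|g_k|^{(\alpha)}$ controls only via $|u|^{(1+\alpha)}_{Q_T}$, not $\|u\|_\infty$; you need an intermediate parabolic interpolation inequality (with a $T$-uniform constant) to absorb this into $\epsilon|u|^{(\alpha+2)}_{Q_T}$ before the maximum principle can finish the job; (ii) your existence construction via Cauchy--Dirichlet problems on expanding cubes violates the corner compatibility conditions at $\partial(-R,R)^N\times\{0\}$ for generic $u_0$, so the approximants are not in $C^{2+\alpha,1+\alpha/2}$ up to the corner and the limit $R\to\infty$ needs interior estimates plus a diagonal argument; citing the whole-space Cauchy theorem directly, as the paper does, is cleaner. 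Both points are repairable, so your argument is viable, but as written these are the two steps you would have to fill in.
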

\begin{proof}
The proof is given in Appendix \ref{appe}.
\end{proof}

\section{The existence theorem}\label{ex}

In this section we prove Theorem  \ref{mainTH}.  At the end of the section we give a simple counterexample where existence may fail.

\begin{proof}[Theorem \ref{mainTH}]
{\bf Step 1: Lipschitz regularization of $F, G$}. Let $K > 0$ be large enough, so that 
\begin{equation}\label{Kl}
K \ge \max\left\{2|m_0|^{(1)}_{\tor}, 2\left(L_h|m_0|^{(1)}_{\tor}+ C_0\right), \frac{2}{\delta}\right\},
\end{equation}
where $\delta, L_h, C_0$ are as in (A4), (A5) and \eqref{A4con}. Let $\varphi, \bar \varphi : \re \to \re$ be globally Lipschitz functions such that $\varphi(x) = x$ for all $x \in [1/K, K]$, $ \varphi(x) \in [1/(2K), 2K]$ for all $x \in \re$, and $\bar \varphi(x) = x$ for all $x \in [-K, K]$, $\bar \varphi(x) \in [-2K, 2K]$ for all $x \in \re$. Similarly, let $\psi : \reì^N \to \re^N$ be a globally Lipschitz function such that $\psi(p) = p$ for all $|p| \le K$ and $|\psi(p)| \le 2K$ for all $p \in \re^N$.

We will construct a solution to \eqref{P} with $F, G$ replaced by $\widehat F, \widehat G$ defined as follows:
\[
\begin{split}
\widehat F(a,b,p,q, x,t) &= F(\bar \varphi(a), \varphi(b),\psi(p),\psi(q), x,t) \\
\widehat G(a,b,p,q,H,x,t) &= G(\bar \varphi(a), \varphi(b),\psi(p),\psi(q),H, x,t).
\end{split}
\]
Note that by (A3), $\Gh$ satisfies
\begin{multline}\label{Ghl}
| \Gh(a_1,b_1,p_1,q_,H_1,x,t)-\Gh(a_2,b_2,p_2,q_2,H_2,x,t)| \leq \\ L_G(|a_1-a_2|+|b_1-b_2|+|p_1-p_2|+|q_1-q_2|)(1 + |H_1|)+L_G|H_1-H_2|
\end{multline}
for all $a_i, b_i, p_i, q_i, H_i, x, t$ (possibly by a constant $L_G$ that is larger than the one in (A3)). Moreover, again by (A3) and the fact that $|\bar \varphi|, |\varphi|, |\psi|$ are bounded by $2K$, we have for some $L(K) > 0$
\begin{equation}\label{Ghb}
|\Gh(a,b,p,q,H,x,t)| = |G(a,b,p,q,H,x,t)| \le L(K)(|H| + 1)
\end{equation}
for all $a,b,p,q,H,x,t$. Analogous bounds hold also for $\Fh$ by (A2). 

{\bf Step 2: fixed point set-up}. Let us define the space (see also Remark \ref{rkX})
\begin{multline*}
X_M^T=\{(u,m): u\in W^{2,1}_p (Q_T)\cap C^{1,0}(Q_T),
m\in C^{1,0}(Q_T), \\
\|u\|^{(2)}_{p, Q_T}+|u|^{(1)}_{Q_T}+ |m|^{(1)}_{Q_T} \leq M,\ p>N+2\}.
\end{multline*}
Define now the operator $\mathcal{T}$ on $X_M^T$ in the following way:\\
$$\mathcal{T}(\hat u, \hat m)=(\overline u, \overline m)$$ where $(\overline u, \overline m)$ is the solution 
of the following problems
\begin{equation}\label{PP1}
\begin{cases}
\overline m_t-c_{ij}(x,t)\overline m_{x_ix_j}+\Gh(\hat m,\hat u,D\hat u, D\hat m, D^2\hat u, x,t)=0,& in\ Q_T \\
\overline m(x,0)=m_0(x), & in\ \mathbb{T}^N.\end{cases}
\end{equation}
\begin{equation}\label{PP2}
\begin{cases}
 -\overline u_t-a_{ij}(x,t)\overline u_{x_ix_j}+\Fh(\overline m,\hat u,D\hat u, D\overline m, x,t)=0,& in\  Q_T\\
\overline u(x,T)=h(\overline m(x,T),x),& in\ \mathbb{T}^N
\end{cases}
\end{equation}
We aim at showing that $\mathcal{T}$ is a contraction on $X_M^T$ for suitable $M$ and small $T$.

{\bf Step 3: $\mathcal{T}$ maps $X_M^T$ into itself}, that is, $\mathcal{T}(\hat u, \hat m)=(\overline u, \overline m)\in X_M^T$ for any $(\hat u, \hat m)\in X_M^T$.
Denote by $\overline u^*(x, T-t):=\overline u(x,t)$. The couple of functions $(\overline m, \overline u^*)$ solves \eqref{PP1} and
\begin{equation}\label{PP2main}
\begin{cases}
\overline u^*_t-a_{ij}(x,T-t)\overline u^*_{x_ix_j}+ \\
\widehat F(\overline m(x,T-t),\hat u(x,T-t),D\hat u(x,T-t), D\overline m(x,T-t), x,T-t)=0,& in\  Q_T,\\
 \overline u^*(x,0)=h(\overline m(T,x),x), & in\ \mathbb{T}^N.
 \end{cases}
\end{equation}
The initial condition for $\overline u^*(x,0)$ depends on $\overline m(T,x)$ which is well defined from the regularity of $\overline m(t,x)$ obtained below.
Note that problem \eqref{PP1} is well posed, namely there exists a unique solution $\overline m(x, t)$ such that
$\overline m \in W^{2,1}_{p}(Q_T)$ (see \cite[Theorem 9.1 p. 341]{LSU}).
Moreover, since $\Gh$ satisfies \eqref{Ghb} and $\|\hat u\|^{(2)}_{p, Q_T}\leq M$, then 
 the term $\Gh(\hat m,\hat u,D\hat u, D\hat m, D^2\hat u, x,t)$ is such that  $\|\Gh\|_{p, Q_T}\leq L(K)(M + 1)$. We can therefore apply 
Proposition \ref{w2pest} to \eqref{PP1}  to get
  \begin{equation}\label{prima}
 \|\overline m\|^{(2)}_{p, Q_T}\leq C\left(L(K)(M + 1) +  \|m_0\|^{(2-2/p)}_{p, \tor}\right)\leq C(M),
 \end{equation}
(in what follows, we will not make the dependence on constants on $K$ explicit). Hence, by the embedding \eqref{embRob}, we have the following inequality:
 \begin{equation}\label{embRobe}
  |\overline m|^{(2-\frac{N+2}{p})}_{Q_T}\leq C\left(\|\overline m\|^{(2)}_{p, Q_T}+ \|m_0(x)\|^{(2-2/p)}_{p, \tor}\right), \qquad p>\frac{N+2}{2},\ p\neq N+2,
 \end{equation}
 where $C$ is bounded for bounded values of $T$.\\
 Hence, from \eqref{prima} and (A5) 
 \begin{equation}\label{est}
 |\overline m|^{(2-\frac{N+2}{p})}_{Q_T}\leq C(M),\ p>\frac{N+2}{2},\ p\neq n+2.
 \end{equation}
 Since $p>N+2$, i.e. $2-\frac{N+2}{p}>1$, then \eqref{holdgen} 
easily yields
\begin{equation}\label{contr1}
 |\overline m|^{(1)}_{Q_T}\leq |m_0|^{(1)}_{\tor}+ T^{\frac{1}{2}-\frac{n+2}{2p}}
C(M).
 \end{equation}
 In particular, note that, from \eqref{est}, we have that the trace $\overline m(x,T)$ is well defined
 and
 \begin{equation}\label{tracem}
 |\overline m(x,T)|^{(1)}_{\tor}\leq C(M).
 \end{equation}
 
 We now pass to study the well posedness of problem  \eqref{PP2main} and the regularity of its solution $\overline u^*$.
  From estimate \eqref{tracem}, the regularising assumptions (A4) and \eqref{A4con} on $h$, the initial condition 
  $\overline u^*(x,0)=h(\overline m(T,x),x)$ is well defined.
  In turn, when $\overline m(x,t)$ is assigned with the regularity found above (see \eqref{est}) problem 
 \eqref{PP2main} admits a solution $\overline u^*$ by boundedness of $\Fh$. 
   From the initial condition for $\overline u^*$ and \eqref{A4con},
  \begin{equation}\label{initialu}
 |\overline u^*(x,0)|^{(2)}_{\tor}\leq L_h|\overline m(x,T)|^{(1)}_{\tor}+C_0.
 \end{equation}
By \eqref{contr1},
   \begin{equation}\label{initialudue}
 |\overline u^*(x,0)|^{(2)}_{\tor}\leq L_h|m_0|^{(1)}_{\tor}+ L_hC(M)\,T^{\frac{1}{2}-\frac{n+2}{2p}}
+C_0\leq C(M).
 \end{equation}
In particular, taking into account again that $\tor$ is bounded, for any $q > 1$, for some constant $C$ we have 
  \begin{equation}\label{initialutre}
 \|\overline u^*(x,0)\|^{(2-2/q)}_{q,Q_T}\leq C\|\overline u^*(x,0)\|^{(2)}_{q,Q_T}\leq C|\overline u^*(x,0)|^{(2)}_{Q_T}\leq C(M).
 \end{equation}
 We now study the regularity of $\overline u^*$.
Since the estimate in Proposition \ref{w2pest} is valid for any $q$, we obtain, because of the boundedness of $\Fh$, \eqref{contr1} and \eqref{initialutre},
 \begin{equation}\label{alpha}
 \|\overline u^*\|^{(2)}_{q, Q_T}\leq C(M, q), \text{ for any } q.
 \end{equation}
%
Applying \eqref{L22} of Lemma \ref{L2} we get
  \begin{equation}\label{beta2}
 \|\overline u^*\|^{(2)}_{p, Q_T}\leq C(M, 2p)\, T^{\frac{1}{2p}}.
 \end{equation}
 Hence, using again embedding \eqref{embRob} and \eqref{beta2} we obtain
 \begin{equation}\label{embRobu}
  |\overline u^*|^{(2-\frac{n+2}{p})}_{Q_T}\leq C\left(\|\overline u^*\|^{(2)}_{p, Q_T}+ \|\overline u^*(x,0)\|^{(2-2/p)}_{p, Q_T}\right)\leq C(M, p),  
 \end{equation}
Therefore, using \eqref{holdgen} of Lemma \ref{L1} and taking into account  \eqref{embRobu}, we have
   \begin{equation}\label{beta3}
 |\overline u^*|^{(1)}_{Q_T}\leq |\overline u^*(x,0)|^{(1)}_{Q_T}+ C(M,p)T^{\frac{1}{2}-\frac{n+2}{2p}}.
 \end{equation}
At this point, using estimate \eqref{initialudue} we obtain
 \begin{equation}\label{beta4}
|\overline u^*|^{(1)}_{Q_T}\leq L_h|m_0|^{(1)}_{\tor}+ C_0+C(M,p)T^{\frac{1}{2}-\frac{n+2}{2p}}.
 \end{equation}
Now we can easily see that \eqref{beta2} and \eqref{beta4} together with \eqref{contr1} allow us to prove that 
 $\mathcal{T}$ maps $X_M^T$ into itself. Indeed,
 \begin{multline*}
\|\overline u^*\|^{(2)}_{p, Q_T}+ |\overline u^*|^{(1)}_{Q_T}+
|\overline m|^{(1)}_{Q_T}\\
\leq C(M, 2p)\, T^{\frac{1}{2p}}+  
(L_h+1)|m_0|^{(1)}_{\tor}+ C_0+C_1(M,p)T^{\frac{1}{2}-\frac{n+2}{2p}}.
\end{multline*}
At this point 
we choose $$M_1:=3((L_h+1)|m_0|^{(1)}_{\tor} + C_0)$$ and we take $T$ sufficiently small that
$$C(M_1, 2p)\, T^{\frac{1}{2p}}\leq M_1/3$$ and
$$C_1(M_1, p)T^{\frac{1}{2p}-\frac{n+2}{p}}\leq  M_1/3,$$ thus obtaining 
$$\|\overline u^*\|^{(2)}_{p, Q_T}+ |\overline u^*|^{(1)}_{Q_T}+|\overline m|^{(1)}_{Q_T}\leq M_1,$$ that is,
$$\mathcal{T}: X_{M_1}^T\to X_{M_1}^T,$$
for all $T$ sufficiently small.

 {\bf Step 4:} $$\mathcal{T}: X_{M_1}^T\to X_{M_1}^T$$ is a {\bf contraction} operator.\\
Let $(\hat u_i,\hat m_i)\in X_{M_1}^T$, $i=1,2$. Let us denote 
$\mathcal{T}(\hat u_2,\hat m_2)=:(\overline u_2,\overline m_2)$ and 
 $\mathcal{T}(\hat u_1,\hat m_1)=:(\overline u_1,\overline m_1)$.
 We have to prove that 
 \begin{multline*}
\|\overline u_1-\overline u_2\|^{(2)}_{p, Q_T}+ |\overline u_1-\overline u_2|^{(1)}_{Q_T}+|\overline m_1-\overline m_2|^{(1)}_{Q_T}\leq\\
 \gamma
 \bigg(\|\hat u_1-\hat u_2\|^{(2)}_{p, Q_T}+ |\hat u_1-\hat u_2|^{(1)}_{Q_T}+
 |\hat m_1-\hat m_2|^{(1)}_{Q_T}\bigg),\nn
 \end{multline*}
 with $0<\gamma<1$.\\
 We denote by $\overline U:=\overline u_1-\overline u_2$, $\overline M:=\overline m_1-\overline m_2$,
 $\hat U:=\hat u_1-\hat u_2$, and $\hat M:=\hat m_1-\hat m_2$.
 Denoting by $\overline U^*(x,T-t):=\overline U(x,t)$,
 taking into account \eqref{PP1}-\eqref{PP2} and \eqref{PP2main},
  $\overline U^*$ and  $\overline M$ satisfy:
  
\begin{equation}\label{PP1contr}
\begin{cases}
\overline M_t-c_{ij}(x,t)\overline M_{x_ix_j}+\Gh(\hat u_1,\hat m_1,D\hat u_1, D\hat m_1, D^2\hat u_1, x,t)-  \\
\quad \Gh(\hat u_2,\hat m_2,D\hat u_2, D\hat m_2, D^2\hat u_2, x,t)=0, & \text{ in $Q_T$}, \\
\overline M(x,0)=0, & \text{in $\mathbb{T}^N$}.\end{cases}
\end{equation}
\begin{equation}\label{PP2contr}
\begin{cases}
\overline U^*_t-a_{ij}(x,t)\overline U^*_{x_ix_j}+&\\
\quad \Fh(\overline u_1(x,T-t),\hat m_1(x,T-t),D\hat u_1(x,T-t), D\overline m_1(x,T-t), x,T-t)-&\\
\quad \Fh(\overline u_2(x,T-t),\hat m_2(x,T-t),D\hat u_2(x,T-t), D\overline m_2(x,T-t), x,T-t)=0,\\ 
 \overline U^*(x,0)=h(\overline m_1(T,x),x)-h(\overline m_2(T,x),x), \qquad \text{ in $ \mathbb{T}^N$}.\end{cases}
\end{equation}
 Since $\hat u_i$ and $\hat m_i$, $i=1,2$ belong to $X_{M_1}^T$, we follow the same procedure as in Step 1.
 First, note that $\Gh$ satisfies \eqref{Ghl}, so
 \begin{multline}\label{so}
 \| \Gh(\hat u_1,\hat m_1,D\hat u_1, D\hat m_1, D^2\hat u_1)- \Gh(\hat u_2,\hat m_2,D\hat u_2, D\hat m_2, D^2\hat u_2)\|_{p, Q_T} \le \\
 L_G\left( |\hat U|^{(1)}_{Q_T}+|\hat M|^{(1)}_{Q_T}\right) \|1 + |D^2\hat u_1| \|_{p, Q_T}+L_G\|\hat U\|^{(2)}_{p, Q_T} \\ \le C \left( |\hat U|^{(1)}_{Q_T}+ \|\hat U\|^{(2)}_{p, Q_T} +|\hat M|^{(1)}_{Q_T} \right)
 \end{multline}
 Therefore, by Proposition \ref{w2pest} and $\overline M(x,0)=0$,
 \begin{equation}\label{contrM}
 \|\overline M\|^{(2)}_{p, Q_T}\leq C(|\hat U|^{(1)}_{Q_T}+ \|\hat U\|^{(2)}_{p, Q_T} +|\hat M|^{(1)}_{Q_T}  ).
 \end{equation}
Hence, from \eqref{contrM}, \eqref{holdgen} and the embedding \eqref{embRob}, 
 \begin{equation}\label{M2}
 |\overline M|^{(1)}_{Q_T}\leq
C(p)(|\hat U|^{(1)}_{Q_T}+ \|\hat U\|^{(2)}_{p, Q_T} +|\hat M|^{(1)}_{Q_T})T^{\frac{1}{2}-\frac{n+2}{2p}}.
 \end{equation}
 As far as $\overline U^*$ is concerned, from Proposition \ref{w2pest},
   \begin{equation}\label{contrU0}
 \|\overline U^*\|^{(2)}_{q, Q_T}\leq C(q)(\|\hat U\|^{(1)}_{q, Q_T}+\|\overline M\|^{(1)}_{q, Q_T}+
 \|\overline U^*(x,0)\|^{(2-2/q)}_{q, \tor}) \text{ for any } q.
 \end{equation}
 Note that, from assumption (A4) and using the boundedness of $\tor$, for some constant $C$ we have
 \begin{multline}\label{stimaU0}
\|\overline U^*(x,0)\|^{(2-2/q)}_{q, \tor}\leq C\|\overline U^*(x,0)\|^{(2)}_{q, \tor}=
 C \|h[\overline m_1(T)]-h[\overline m_2(T)]\|^{(2)}_{q, \tor}\\ 
\leq C|h[\overline m_1(T)]-h[\overline m_2(T)]|^{(2)}_{\tor}\leq CL_h|\overline M(T,x)|^{(1)}_{\tor}.
  \end{multline}
  Hence from \eqref{stimaU0}, \eqref{contrU0} becomes
  \begin{equation}\label{contrU}
 \|\overline U^*\|^{(2)}_{q, Q_T}\leq C(q)(\|\hat U\|^{(1)}_{q, Q_T}+\|\overline M\|^{(1)}_{q, Q_T}+
 |\overline M(T,x)|^{(1)}_{\tor}).
 \end{equation}
Then, in view of \eqref{M2} and boundedness of $Q_T$,
\begin{equation}\label{contrU2}
 \|\overline U^*\|^{(2)}_{q, Q_T}\leq C(q)(|\hat U|^{(1)}_{Q_T}+ \|\hat U\|^{(2)}_{p, Q_T} +|\hat M|^{(1)}_{Q_T}),  \text{ for any } q.
 \end{equation}
From \eqref{L22} of Lemma \ref{L2} we obtain
  \begin{equation}\label{contrU2}
 \|\overline U^*\|^{(2)}_{p, Q_T}\leq C(p)(|\hat U|^{(1)}_{Q_T}+ \|\hat U\|^{(2)}_{p, Q_T} +|\hat M|^{(1)}_{Q_T})T^{1/2p}.
 \end{equation}
 From the embedding result \eqref{embRob} (see also \eqref{beta3}), we have
  \begin{multline}\label{contrU3}
 |\overline U^*|^{(1)}_{Q_T}\leq  |\overline U^*|^{(2-\frac{N+2}{p})}_{Q_T}\le C_1(p)(\|\overline U^*\|^{(2)}_{p, Q_T} + \|\overline U^*(x,0)\|^{(2-2/p)}_{p, \tor}) \\
 \le C(p)(|\hat U|^{(1)}_{Q_T}+ \|\hat U\|^{(2)}_{p, Q_T} +|\hat M|^{(1)}_{Q_T})(T^{1/2p} + T^{\frac{1}{2}-\frac{n+2}{2p}}).
%
 \end{multline}
 where the last inequality comes from \eqref{contrU2}, \eqref{stimaU0} and \eqref{M2}.
 

 At this point, taking into account \eqref{contrU2}, \eqref{contrU3}, \eqref{M2}, for $T$ sufficiently small, we have proved that the operator $\mathcal{T}$ is a contraction. 
 The fixed point $(u^*(T-t),m)$ is a solution to \eqref{P} with $F, G$ replaced by $\widehat F, \widehat G$, with the required regularity.
 
 {\bf Step 5: back to the initial problem.} Note that $\Fh$ and $\Gh$ coincide with $F$ and $G$ respectively whenever the fixed point $(u,m )$ satisfies $m(x, t) \in [1/K, K]$, $u(x, t) \in [-K, K]$ and $|Du(x, t)|, |Dm(x,t)| \le K$ on $Q_T$. This is true if $T$ is sufficiently small. Indeed, by \eqref{beta4} and the choice \eqref{Kl} of $K$ one has
 \[
|u|^{(1)}_{Q_T} = |u^*|^{(1)}_{Q_T}\leq L_h|m_0|^{(1)}_{\tor}+ C_0+C(M,p)T^{\frac{1}{2}-\frac{n+2}{2p}} \le K,
 \]
 while by \eqref{contr1},
 \[
|m|^{(1)}_{Q_T}\leq |m_0|^{(1)}_{\tor}+ T^{\frac{1}{2}-\frac{n+2}{2p}}C(M) \le K.
 \]
 Finally, by \eqref{est} and (A4),
 \[
 \min_{Q_T} m \ge \min_{\tor} m(x, 0) -  |m|^{(2-\frac{N+2}{p})}_{Q_T} T^{\frac{1}{2}-\frac{n+2}{2p}} \ge \delta - C(M) T^{\frac{1}{2}-\frac{n+2}{2p}} \ge \frac{1}{K},
 \]
 that yields the desired result.
 \end{proof}
 \begin{remark}\label{localun}
 From \eqref{prima}, $m\in W^{2,1}_p(Q_T)$, with $p>N+2$.
Hence the function found above is locally unique in the following sense:
for any $M>0$ sufficiently large, there exists $T_M$ such that for any $T<T_M$ there exists an unique solution 
 of \eqref{P}, $(u,m)\in X_{M}^T$.
 \end{remark}
 
\begin{remark}
If we assume also that $a_{ij}$, $c_{ij}$, $F$ and $G$ are H\"older continuous with respect to $x,t$, if $m_0\in C^{2+\alpha}(\mathbb{T}^N)$ and $h$ takes its values in $C^{2+\alpha}(\mathbb{T}^N)$, then the solution of Theorem \ref{mainTH} will belong to
$C^{2+\alpha, (2+\alpha)/2 }(Q_T)$.
\end{remark}



\begin{remark}\label{rkX}
In the definition of the space $X_M^T$ we take $u$ belonging both to $W^{2,1}_p (Q_T)$ and $C^{1,0}(Q_T)$. This may appear unnecessary, since $W^{2,1}_p (Q_T)$ is continuously embedded in $C^{1,0}(Q_T)$. The crucial point is that such embedding depends on $T$; to rule out this dependence one has to make the initial datum explicit (see in particular \eqref{embRob}). Therefore, to simplify a bit the argument we preferred to control separately both $\|u\|^{(2)}_{p, Q_T}$ and $|u|^{(1)}_{Q_T}$.
\end{remark}

\subsection{Non-regularizing $h$: a counterexample to short-time existence}\label{counter}
As mentioned in Remark \ref{remass}, it is crucial in our fixed point method that $h$ in the final condition
$u(x,T)=h[m(T)]$ be a regularising function of $m$.
We will show in the sequel that without this assumption, existence for arbitrary small times $T$ may even fail for linear problems.
Let us consider the following linear parabolic backward-forward system, with $\alpha\in\re$ to be chosen
\begin{equation}\label{linear}
\begin{cases}
-u_t-\Delta u=0, & \text{in $\mathbb{T}^N\times (0,T)$}, \\
m_t-\Delta m=\Delta u, & \text{in $\mathbb{T}^N\times (0,T)$}, \\
u(x,T)=\alpha\, m(x,T),\quad m(x,0)=m_0(x) & \text{in $\mathbb{T}^N$}.
\end{cases}
\end{equation}
Here, $h[m](x) = \alpha m(x)$, and clearly $h[m]$ has the same regularity of $m$. Thus, $h$ does not satisfy (A4).
We claim that
\[
\begin{array}{c}
\textit{For all $\alpha < -2$, there exist smooth initial data $m_0$ and a sequence $T_k \to 0$} \\ 
\textit{such that \eqref{linear} is not solvable on $[0, T_k]$.}
\end{array}
\]

Suppose that, for some $T > 0$, there exists a  solution  $(u, m)$  to  \eqref{linear}.
 Let $\lambda_k$ and $\phi_k(x)$, $k\geq 0$ be the eigenvalues and eigenfunctions of the Laplace operator on $\tor$, i.e.
$$-\Delta\phi_k=\lambda_k\phi_k,\ \phi_k(x)\in C^{\infty}(\tor),\ k\geq 0.$$
Let $m_k(t)=\int_{\tor}m(x,t)\phi_k(x)dx$, $u_k(t)=\int_{\tor}u(x,t)\phi_k(x)dx$,\ $m_{0k}=\int_{\tor}m_0(x)\phi_k(x)dx$.
We can represent $(u, m)$ by
$$m(x,t)=\sum_0^{+\infty}m_k(t)\phi_k(x),\ u(x,t)=\sum_0^{+\infty}u_k(t)\phi_k(x),$$
and $m_k$ and $u_k$ satisfy
\begin{equation}\label{lineark}
\begin{cases}
-u_k'+\lambda_ku_k=0 & \text{in $(0,T)$}, \\
 m_k'+\lambda_km_k=-\lambda_ku_k & \text{in $(0,T)$}, \\
u_k(T)=\alpha\,m_k(T),\quad m_k(0)=m_{0k}.
\end{cases}
\end{equation}

We will suppose that the coefficients of the initial datum satisfy $m_{0k} \neq 0$ for all $k$ (this is possible as soon as $m_{0k}$ vanishes sufficiently fast as $k \to \infty$).

Deriving the second equation and taking into account the first one in \eqref{lineark}, we get
\begin{equation}\label{eqmk}
\begin{cases}
m_k''-\lambda_k^2m_k=0 & \text{in $(0,T)$}, \\
m_k(0)=m_{0k},   \\
\lambda_k\,(\alpha+1)\,m_k(T)=-m_k'(T).
\end{cases}
\end{equation}
Solving \eqref{eqmk} we obtain that 
\begin{equation}\label{mkeq}m_k(t)=A_k\sinh(\lambda_kt)+B_k\cosh(\lambda_k t)\end{equation} for some $A_k, B_k \in \re$, where
$$B_k=m_{0k} \neq 0.$$ If  $(\alpha+1)\sinh(\lambda_k T)+\cosh(\lambda_k T)\neq 0$, then $A_k$ is uniquely determined, i.e.
$$A_k= -B_k\frac{(\alpha+1)\cosh(\lambda_k T)+\sinh(\lambda_k T)}{(\alpha+1)\sinh(\lambda_k T)+\cosh(\lambda_k T)}.$$

Note that if  $\alpha<-2$, $(\alpha+1)\sinh(\lambda_k T)+\cosh(\lambda_k T)$ vanishes for positive values of $T$, and in particular when $T$ coincides with some 
$$T_k :=\frac{1}{\lambda_k}\tanh^{-1}\left(-\frac{1}{\alpha+1}\right).$$

In such case we reach a contradiction, since $(\alpha+1)\cosh(\lambda_k T)+\sinh(\lambda_k T) \neq 0$, and therefore $A_k$ cannot be determined. Since \eqref{mkeq} has no solutions, $m$ cannot exist. Finally, by the fact that $\lambda_k\to +\infty$ as $k\to +\infty$, we have $T_k \to 0$,
hence a short time existence result (as stated in Theorem \ref{mainTH}) cannot hold: for any $\overline T$ there exists a $T_k\in(0,\overline T]$
such that the $k-$th problem does not admit a solution in $[0, T_k]$, and for this reason, problem \eqref{linear} cannot be solved.

\begin{remark}
Note that  without the regularising assumption on $h$ the existence argument of Theorem \ref{mainTH} would work supposing additional smallness of some data.
For example, one could consider the equation $m_t-\Delta m=\epsilon\Delta u$ (in a system like \eqref{linear}) with $\epsilon$ sufficiently small,
or final datum $u(x,T)=\alpha\, m(x,T)$ with $|\alpha|$ and $m_0(x)$ suitably small. This is coherent with the previous non-existence counterexample where $\alpha<-2$.
\end{remark}

\section{Some parabolic systems arising in the theory of Mean-Field Games}\label{MFG}

Mean-Field Games (MFG) have been introduced simultaneously by Lasry and Lions \cite{LL1}, \cite{LL2}, \cite{LL3} and Huang et. al. \cite{HMC} to describe Nash equilibria in games with a very large number of identical agents. A general form of a MFG system can be derived as follows. Consider a given population density distribution $m(x, t)$. A typical agent in the game wants to minimize his own cost by controlling his state $X$, that is driven by a stochastic differential equation of the form
\begin{equation}\label{sde}
d X_s = -v_s ds + \Sigma(X_s, s) d B_s \qquad \forall s > 0,
\end{equation}
where $v_s$ is the control, $B_s$ is a Brownian motion and $\Sigma(\cdot, \cdot)$ is a positive matrix. The cost is given by
\[
\mathbb{E}^{X_0} \left[ \int_0^T L(X_s, s, v_s, m(X_s, s)) ds + h[m(T)](X_T) \right],
\]
where $L$ is some Lagrangian function, $h$ is the final cost, defined as a functional of $m(\cdot, T)$ and the state $X_T$ at the final horizon $T$ of the game. Assume that all the data are periodic in the $x$-variable. Formally, the dynamic programming principle leads to an Hamilton-Jacobi-Bellman equation for the value function of the agent $u(x,t) = \mathbb{E}^{x} \int_t^T L ds + h[m(T)](X_T)$, that is, $u$ solves
\begin{equation}\label{hjb}
\begin{cases}
-u_t - A_{ij} u_{ij} + H(x, t, Du, m) = 0 & \text{in $\mathbb{T}^N\times (0,T)$}, \\
u(x,T)=h[m(T)](x) & \text{in $\mathbb{T}^N$},
\end{cases}
\end{equation}
where $A(x,t) = \frac{1}{2}\Sigma \Sigma^T(x,t)$ and the Hamiltonian $H$ is the Legendre transform of $L$ with respect to the $v$ variable, i.e.
\[
H(x, t, p, m) = \sup_{v \in \re^N} \{p\cdot v - L (x, t, v, m)\}.
\]
Moreover, the optimal control $v^*_s$ of the agent is given in feedback form by
\[
v^*(x, s) \in \argmax_v \{Du(x, s) \cdot v - L (x, s, Du(x, s), m(x,s))\}
\]
Typically, one assumes $L$ to be convex in the $v$-entry. In this case, $H$ is strictly convex in the $p$-entry, and $v^*(x, s)$ can be uniquely determined by $$v^*(x, s) =D_p H (x, s, Du(x, s), m(x,s)).$$

In an equilibrium situation, since all agents are identical, the distribution of the population should coincide with the distribution of all the agents when they play optimally. Hence, the density of the law of every single agent should satisfy the
following Fokker-Planck equation
\begin{equation}\label{fkp}
\begin{cases}
m_t - \partial_{ij} (A_{ij} m) - {\rm div} (m D_p H(x, t, Du, m)) = 0 & \text{in $\mathbb{T}^N\times (0,T)$}, \\
m(x,0)=m_0(x) & \text{in $\mathbb{T}^N$},
\end{cases}
\end{equation}
where $m_0$ is the density of the initial distribution of the agents (\eqref{fkp} can be derived by plugging $v^*$ into \eqref{sde} and using the Ito's formula).

The coupled system of nonlinear parabolic PDEs \eqref{hjb}-\eqref{fkp} with backward-forward structure is of the form \eqref{P} if
\begin{equation}\label{FGmfg}
\begin{split}
F(x,t, u, m, Du, Dm)& = H(x, t, Du, m), \\
G(x,t, u, m, Du, D^2 u, Dm)& = -(\partial_{x_i x_j} A_{ij})m - 2 (\partial_{x_i} A_{ij}) m_{x_j} - D_p H \cdot D m \\
& -m(H_{x_i p_i} + H_{p_i p_j} u_{x_i x_j} + H_{mp_i} m_{x_i}),
\end{split}
\end{equation}
where the dependence on $H$, $A$, $m$ and their derivatives with respect to $(m, Du, x, t)$ and $(x, t)$ respectively has been omitted for brevity.

A general short-time existence result for \eqref{hjb}-\eqref{fkp} reads as follows.

\begin{theorem}\label{shortmfg} Suppose that $A \in C^{2}(Q_T)$, $h$ satisfies (A4), $m_0$ satisfies (A5) and
\begin{itemize}
\item $H$ is continuous with respect to $x,t,p,m$,
\item $H$, $\partial_{p_i} H$, $\partial_{x_i p_i}^2 H$, $\partial_{p_i p_j}^2 H$, $\partial_{m p_i}^2 H$ are locally Lipschitz continuous functions with respect to $p, m \in \re^N \times \re^+$, uniformly in $x, t \in Q_T$.
\end{itemize}
Then, there exists $\overline T>0$ such that for all $T \in (0, \overline T]$ the system \eqref{hjb}-\eqref{fkp} admits a regular solution $u,m\in W^{2,1}_q(Q_T)$ with $q>N+2$.
\end{theorem}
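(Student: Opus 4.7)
The strategy is to apply Theorem \ref{mainTH} directly to the MFG system \eqref{hjb}--\eqref{fkp} by choosing $a_{ij} = c_{ij} = A_{ij}(x,t)$ and taking $F, G$ as in \eqref{FGmfg}, then verifying that the hypotheses of Theorem \ref{shortmfg} imply assumptions (A1)--(A5). Assumptions (A4) and (A5) are part of the statement, and (A1) is immediate from $A \in C^2(Q_T)$. For (A2), the function $F(a,b,p,q,x,t) = H(x,t,p,b)$ does not depend on $a$ or $q$, so the local Lipschitz estimate reduces to that of $H$ in the entries $(p,m)$, which is assumed; local boundedness of $H$ on $\{|p| \le 2M,\ 1/(2M) \le m \le 2M\}\times Q_T$ then follows from continuity of $H$ in $(x,t)$ combined with local Lipschitzianity in $(p,m)$.

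The main work is the verification of (A3). Reading off the expression \eqref{FGmfg} for $G$, one notices that the second derivatives of $u$ enter only through the single term $-m\, H_{p_i p_j}(x,t,Du,m)\, u_{x_i x_j}$, which is \emph{linear} in the slot $H = D^2 u$, with coefficient $-m\, H_{p_i p_j}$. On the set $|b|, |b|^{-1}, |p| \le 2M$, the hypothesis that $H_{p_ip_j}$ is locally Lipschitz in $(p,m) \in \re^N \times \re^+$ forces this coefficient to be bounded by some $L_G(M)$, which simultaneously yields the growth bound $|G| \le L_G(M)(1 + |H|)$ and the global Lipschitz continuity in $H$. The remaining pieces of $G$, namely $(\partial_{x_ix_j}A_{ij})\,m$, $(\partial_{x_i}A_{ij})\,m_{x_j}$, $D_pH\cdot Dm$, $m\,H_{x_ip_i}$ and $m\,H_{mp_i}\,m_{x_i}$, are bounded on the same set, using $A \in C^2(Q_T)$ together with the local boundedness of $H$, $H_p$, $H_{xp}$ and $H_{mp}$. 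Finally, the local Lipschitz estimate in $(a,b,p,q)$ carrying the factor $(1 + |H_1|)$ follows by splitting each product via the usual add-and-subtract trick; for example, $m_1 H_{p_ip_j}(x,t,p_1,m_1)\,(u_1)_{x_ix_j} - m_2 H_{p_ip_j}(x,t,p_2,m_2)\,(u_1)_{x_ix_j}$ is controlled by the local Lipschitz constant of the coefficient times $|H_1|$, and analogously for the other terms, where no $H$-factor appears.

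The main (mild) obstacle is the careful bookkeeping required in (A3): the density $m$ must remain bounded away from zero in order for $H$ and its $m$-derivatives to be controlled, and this is exactly what the condition $|b|^{-1} \le 2M$ in (A3) encodes, matching the positivity assumption (A5) and the fact that the Lipschitz hypotheses on $H$, $H_p$, $H_{xp}$, $H_{pp}$, $H_{mp}$ in Theorem \ref{shortmfg} are formulated on $\re^N \times \re^+$. Once (A1)--(A5) are verified, Theorem \ref{mainTH} applied to the system produces the desired regular solution $u, m \in W^{2,1}_q(Q_T)$ with $q > N+2$, completing the proof.
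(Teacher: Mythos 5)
Your proposal is correct and follows exactly the route of the paper, whose proof of Theorem \ref{shortmfg} consists of the single observation that, with $a_{ij}=c_{ij}=A_{ij}$ and $F,G$ as in \eqref{FGmfg}, the stated hypotheses yield (A1)--(A5) so that Theorem \ref{mainTH} applies. Your verification of (A2)--(A3), in particular the linearity of $G$ in $D^2u$ with coefficient $-m\,H_{p_ip_j}$ and the role of the lower bound on $m$, simply makes explicit what the paper leaves to the reader.
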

\begin{proof} In view of \eqref{FGmfg} and the standing assumptions, it suffices to apply Theorem \ref{mainTH}. \end{proof}

A typical case in the MFG literature is when $L$ has split dependence with respect to $m$ and $v$, that is,
\[
L(x, t, v, m) = L_0(x, t, v) + f(x, t, m).
\]
The existence of smooth solutions in this case has been explored in several works, see e.g. \cite{Car, CT, GPS1, GPS2, Gbook} and references therein. Existence for arbitrary time horizon $T$ typically requires assumptions on the behaviour of $H$ at infinity, that are crucial to obtain a priori estimates. As stated in the Introduction, our result is for short-time horizons, but no assumptions on the behaviour at infinity of $H$ are required. Note finally that $C^2$ regularity of $H$ is crucial for uniqueness in short-time, while for large $T$ uniqueness may fail in general even when $H$ is smooth (see \cite{BC, BF, Cpre, CT}).

\subsection{Congestion problems}

A class of MFG problems that attracted an increasing interest during the last few years is the so-called congestion case, namely when
\[
L(x, t, v, m) = m^\alpha L_1(v) + f(x, t, m),
\]
where $\alpha > 0$ and $L_1$ is a convex function. The term $m^\alpha$ penalizes $L_1(v)$ when $m$ is large, so agents prefer to move at low speed in congested areas. On the other hand, as soon as the environment density $m$ approaches zero, an agent can increase his own velocity without increasing significantly his cost. The parameter $\alpha$ can be then regarded as the strength of congestion. The difficulties in this problem are mainly caused by the singular term $m^\alpha$. It has been firstly discussed by Lions \cite{LLe}, and has been subsequently addressed in a series of papers. In \cite{GE, GNP, GM} the stationary case is treated. As for the time-dependent problem, short-time existence of weak solutions, under some restrictions on $\alpha$ and $H$, has been proved in \cite{Gr}. A general result of existence of weak solutions for arbitrary time horizon $T$ is discussed in \cite{AP}. So far, smoothness of solutions has been verified in the short-time regimes only in \cite{GV}. All the mentioned works  do rely on the MFG structure of \eqref{hjb}-\eqref{fkp}. Here, we just exploit standard regularizing properties of the diffusion, and propose a general existence result for \eqref{hjb}-\eqref{fkp} that requires very mild local (regularity) assumptions on the nonlinearity $H$. The key tool is the standard contraction mapping theorem, that has already been explored in the MFG setting in \cite{CT} and \cite{Amb} in more particular cases.

Here, $H(x, t, p, m) = m^\alpha H_1(p / m^\alpha) - f(x, t, m)$ where $H_1$ is the Legendre transform of $L_1$, so
\[
\begin{split}
F(x,t, u, m, Du, Dm)& = m^\alpha H_1\left(\frac{Du } {m^\alpha} \right) - f(x, t, m), \\
G(x,t, u, m, Du, D^2 u, Dm)& = -(\partial_{x_i x_j} A_{ij})m - 2 (\partial_{x_i} A_{ij}) m_{x_j} - D_p H_1 \cdot D m \\
& -m^{1-\alpha} (H_1)_{p_i p_j} u_{x_i x_j} +  {\alpha}{m^{-\alpha}}  (H_1)_{p_i p_j} u_{x_j} m_{x_i}.
\end{split}
\]
The MFG system then takes the form
\begin{equation}\label{mfgcong}
\begin{cases}
-u_t - A_{ij} u_{ij} + m^\alpha H_1(Du / m^\alpha)  = f(x, t, m) & \text{in $\mathbb{T}^N\times (0,T)$}, \\
m_t - \partial_{ij} (A_{ij} m) - {\rm div} (m D_p H_1(Du / m^\alpha) ) = 0 & \text{in $\mathbb{T}^N\times (0,T)$}, \\
u(x,T)=h[m(T)](x), \quad m(x,0)=m_0(x) & \text{in $\mathbb{T}^N$}.
\end{cases}
\end{equation}

A corollary of Theorem \ref{shortmfg} thus reads
\begin{corollary}
Suppose that $A \in C^{2}(Q_T)$, $h$ satisfies (A4), $m_0$ satisfies (A5) and
\begin{itemize}
\item $f$ is continuous with respect to $x,t,m$ and locally Lipschitz continuous with respect to $m$,
\item $H_1$ is continuous and has second derivatives that are locally Lipschitz continuous.
\end{itemize}
Then, there exists $\overline T>0$ such that for all $T \in (0, \overline T]$ the system \eqref{mfgcong} admits a solution $u,m\in W^{2,1}_q(Q_T)$ with $q>N+2$.
\end{corollary}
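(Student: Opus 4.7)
The plan is to reduce the corollary to Theorem~\ref{shortmfg} by identifying the Hamiltonian
\[
H(x,t,p,m) := m^\alpha H_1(p/m^\alpha) - f(x,t,m)
\]
and verifying that it meets the regularity hypotheses stated there. Continuity of $H$ on $Q_T \times \re^N \times \re^+$ is immediate from the continuity of $H_1$ and $f$ together with the fact that $m>0$ keeps $m^{\pm\alpha}$ finite.

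The main step is to check, for each of $H$, $\partial_{p_i} H$, $\partial^2_{x_i p_i} H$, $\partial^2_{p_i p_j} H$, $\partial^2_{m p_i} H$, local Lipschitz continuity with respect to $(p,m) \in \re^N \times \re^+$ uniform in $(x,t) \in Q_T$. A direct differentiation gives
\begin{align*}
\partial_{p_i} H &= (H_1)_{p_i}(p/m^\alpha), & \partial^2_{x_i p_i} H &= 0,\\
\partial^2_{p_i p_j} H &= m^{-\alpha}\, (H_1)_{p_i p_j}(p/m^\alpha), & \partial^2_{m p_i} H &= -\alpha\, m^{-1-\alpha} \sum_j p_j\, (H_1)_{p_i p_j}(p/m^\alpha),
\end{align*}
while $f$, being independent of $p$, enters only in $H$ itself. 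On any compact set $K\subset \re^N\times\re^+$ (so $m\ge\delta>0$ and $|p|$ bounded), the argument $p/m^\alpha$ stays in a compact region of $\re^N$ where the second derivatives of $H_1$ are Lipschitz by hypothesis; the factors $m^{\pm\alpha}$, $m^{-1-\alpha}$ and the polynomial dependence on $p$ are smooth and bounded on $K$. Composing these, each displayed quantity is Lipschitz in $(p,m)$ on $K$, uniformly in $(x,t)$. The contribution of $f$ to $H$ is handled by its local Lipschitz dependence on $m$ combined with continuity on the compact set $Q_T$.

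The main technical point I would expect to watch is the bookkeeping of the singular prefactors $m^{-\alpha}$ and $m^{-1-\alpha}$ produced by the scaling $p/m^\alpha$. These are harmless precisely because Theorem~\ref{shortmfg} only demands local regularity on $\re^N\times\re^+$: on subsets bounded away from $m=0$ (which is exactly the regime that the truncation $m\in[1/K,K]$ in the proof of Theorem~\ref{mainTH} isolates) these prefactors, and the argument $p/m^\alpha$ itself, remain bounded. Once the hypotheses of Theorem~\ref{shortmfg} are verified, the existence of $u, m\in W^{2,1}_q(Q_T)$ with $q>N+2$ for sufficiently small $T$ follows at once.
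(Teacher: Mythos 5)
Your proposal is correct and follows exactly the route the paper intends: the corollary is obtained by applying Theorem~\ref{shortmfg} to $H(x,t,p,m)=m^\alpha H_1(p/m^\alpha)-f(x,t,m)$, and your computed derivatives $\partial_{p_i}H$, $\partial^2_{p_ip_j}H$, $\partial^2_{mp_i}H$ agree with the expressions for $G$ displayed before \eqref{mfgcong}. The paper omits the verification entirely, so your explicit check that the singular prefactors $m^{-\alpha}$, $m^{-1-\alpha}$ are harmless on sets where $m$ is bounded away from zero is a welcome (and accurate) elaboration rather than a deviation.
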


\appendix
\section{Appendix}\label{appe}
In this final appendix we prove Proposition \ref{w2pest} of Section \ref{ass}.
%
%
\begin{proof}[Proposition \ref{w2pest}]
We write the proof for the
existence of a solution in the class $C^{2+\alpha, 1+\alpha/2}(Q_T)$ and for the estimate \eqref{320}. In a similar way one obtains the existence in $W^{2,1}_q(Q_T)$ and the proof of \eqref{342}.\\
Recall that the problem on $\mathbb{T}^N\times [0,T]$  is equivalent to the same problem with $1$-periodic data in the $x$-variable in $\re^N\times [0,T]$, namely with all the data satisfying $w(x+z,t)=w(x,t)$ for all $z\in\z^N$.
As far as the existence of a smooth solution of problem \eqref{linapp} is concerned, 
it is sufficient to apply Theorem 5.1 p.320 of \cite{LSU}. Since the solution of such a Cauchy problem is unique, it must be periodic in the $x$-variable.
Now we prove estimate \eqref{320}. 
Let $R_1^N:=[-1,1]^N$ and $R_2^N := [-2,2]^N$.
Clearly \begin{equation}\label{inclu}
[0,1]^N \subset R_1^N\subset R_2^N \subset \re^N
\end{equation} 
and $dist(R_1^N, C(R_2^N))=1$.



We take advantage of local parabolic estimates, which allow us to get an a priori estimate regardless of the lateral boundary conditions which are unknown for us.\\ 
In particular,
using the local estimate (10.5) p. 352 of \cite{LSU} with $\Omega'=R_1^N$ and 
$\Omega'':={R_2^N}$, (note that in our case $S''$ is empty) we have
\begin{equation}\label{locest}
  |u|^{(\alpha+2)}_{R_1^N\times [0,T^*]}\leq C_1\left(|f|^{(\alpha)}_{R_2^N\times [0,T^*]}+ 
  |u_0|^{(\alpha+2)}_{R_2^N}\right)+ C_2 |u|_{R_2^N\times [0,T^*]},
 \end{equation}
 where $T^*<T$, $C_1$ and $C_2$ depend on $N$, $T$, and the 
 modulus of H\"older continuity of the coefficients of the operator. It is now crucial to observe that H\" older norms on $R_1^N\times [0,T^*]$, $R_2^N\times [0,T^*]$ and $\mathbb T^N\times [0,T^*]$ coincide by periodicity of $u$, $f$, $u_0$ in the $x$-variable and the inclusions \eqref{inclu}.
 Hence,
\begin{equation}\label{1}
  |u|^{(\alpha+2)}_{\mathbb T^N\times [0,T^*]}\leq C_1\left(|f|^{(\alpha)}_{\mathbb T^N\times [0,T^*]}+ 
  |u_0|^{(\alpha+2)}_{\mathbb T^N}\right)+ C_2(|u_0|_{\mathbb T^N}+ T^*|u_t|_{\mathbb T^N\times [0,T^*]}).
\end{equation}
Taking $T^*$ sufficiently small we can write 
\begin{equation}\label{step1}
  |u|^{(\alpha+2)}_{\mathbb T^N\times [0,T^*]}\leq C_3\left(|f|^{(\alpha)}_{\mathbb T^N\times [0,T^*]}+ 
  |u_0|^{(\alpha+2)}_{\mathbb T^N}\right),
 \end{equation}
 where $C_3$ depends on the coefficients of the equation, on $N$, $T$, $\alpha$ and $T^*$ does not depend on $u_0$. We can iterate the estimate \eqref{step1}
 to cover all the interval $[0,T]$ in $[\frac{T}{T^*}]+1$ steps, thus obtaining \eqref{320}.
 
 The proof of \eqref{342} is completely analogous. One has to exploit the local estimate in $W^{2,1}_p$ of \cite{LSU}, eq. (10.12), p. 355. Note that since $R_1^N$ and $R_2^N$ consist of finite copies of $[0,1]^N$, norms on $W_p^{2m, m}(R_i^N \times (0,T))$, $i=1,2$, are multiples (depending on $N$) of $W_p^{2m, m}(\mathbb T^N \times (0, T))$.
 \\

\end{proof}

\noindent{\bf Acknowledgments.} The first and third authors are members of GNAMPA-INdAM, and were partially supported by the research project of the University of Padova ``Mean-Field Games and Nonlinear PDEs" and by the Fondazione CaRiPaRo Project ``Nonlinear Partial Differential Equations: Asymptotic Problems and Mean-Field Games". 

\small


\end{document}